\documentclass[preprint,10pt]{elsarticle}
\usepackage{etoolbox}
\AtBeginEnvironment{align}{
\setlength\abovedisplayskip{2pt}
\setlength\belowdisplayskip{2pt}
}

\usepackage{xstring}
 \usepackage{float}
\usepackage{amssymb}
\usepackage{algorithm}
\usepackage{epstopdf}
\usepackage{amsfonts}
\usepackage{amsmath}
\usepackage[colorlinks,
            linkcolor=red,
            anchorcolor=blue,
            citecolor=green
            ]{hyperref}
\usepackage{extarrows}
\usepackage{multirow}
\usepackage{graphicx}
\usepackage{subfigure}

\usepackage{mathptmx}
\usepackage{enumitem}  
\setlist[itemize]{
    topsep=2pt,      
    partopsep=0pt,
    itemsep=2pt,     
    parsep=0pt,      
}
\usepackage{titlesec}
\titlespacing{\section}{0pt}{2ex plus 1ex minus .2ex}{1ex plus .2ex}
\titlespacing{\subsection}{0pt}{1ex plus 0.8ex minus .2ex}{1ex plus .2ex}
\biboptions{numbers,sort&compress}
\numberwithin{equation}{section}
\journal{Applied Mathematics Letters}

\begin{document}
\newtheorem{Thm}{Theorem}[section]
\newtheorem{Cor}[Thm]{Corollary}
\newtheorem{Lem}[Thm]{Lemma}
\newtheorem{Prop}[Thm]{Proposition}
\newtheorem{Def}[Thm]{Definition}
\newtheorem{Rem}[Thm]{Remark}
\newtheorem{Assump}[Thm]{Assumption}

\newtheorem{Exam}[Thm]{Example}

\begin{frontmatter}

\title{Sparse descent subspace-Newton  thresholding  algorithm for  sparse optimization problems}

\author[a]{Zhong-Feng Sun}
\ead{zfsun@sdut.edu.cn}

\author[b]{Nan Meng}
\ead{Nan.Meng@nottingham.edu.cn}

\author[c]{Yun-Bin Zhao\corref{cor}}
\ead{yunbinzhao@cuhk.edu.cn}
\cortext[cor]{Corresponding author. }

\address[a]{School of Mathematics and Statistics, Shandong University of Technology, Zibo, Shandong,  China}

\address[b]{School of Mathematical Sciences, University of Nottingham Ningbo China, Ningbo, Zhejiang, China}

\address[c]{Shenzhen International Center for Industrial and Applied
Mathematics, SRIBD, The Chinese University of Hong Kong, Shenzhen, China}

\begin{abstract}
In this paper, we propose a sparse descent direction derived from the Newton direction within a low-dimensional subspace for sparse optimization problems.  Building upon this direction and incorporating hard thresholding and orthogonal projection, we develop a novel algorithm  termed the sparse descent subspace-Newton  thresholding (SDSNT) algorithm. We further  establish its global convergence under the restricted isometry property $\delta_{3k}\leq \sqrt{3/20}$ of the measurement matrix. Numerical experiments on random sparse vector recovery and audio signal reconstruction reveal that  SDSNT achieves comparable performance to several state-of-the-art algorithms. Notably, relative to most of them, it delivers superior recovery at high sparsity levels and improved efficiency at low sparsity levels.
\end{abstract}

\begin{keyword}
Sparse optimization problems, Restricted isometry property, Sparse descent direction, Subspace-Newton method, Hard thresholding algorithm, Signal-to-noise ratio
\end{keyword}
\end{frontmatter}

 \section{Introduction}
  In this paper, we aim to recover a sparse vector $x\in \mathbb{R}^n$ from its linear measurements $y= Ax+\nu \in \mathbb{R}^m$ with unknown  errors $\nu \in \mathbb{R}^m$, where $ A \in\mathbb{R}^{m\times n}$ is a given measurement matrix with $m \ll n.$ The recovery of $x$ from the data $(y,A)$  can be formulated as the following sparse optimization problem
 \begin{align}\label{model}
\arg \min_{z\in \mathbb{R}^n} \left\{\frac{1}{2} \| y-Az\|_2^2: ~ \| z\|_0\leq k\right\},
\end{align}
where $\| \cdot\|_0$ denotes the sparsity level (i.e., the number of nonzero entries) of a vector, and $k$ is an  estimate of $\|x\|_0$.

 Hard thresholding algorithms constitute a mainstream class of sparse optimization methods for solving \eqref{model}. Gradient descent and Newton's method are two fundamental approaches to unconstrained optimization. By combining gradient descent with hard thresholding, the iterative hard thresholding (IHT) algorithm \cite{BD08} was developed for solving  \eqref{model}. While IHT enjoys an extremely low computational cost per iteration, it suffers from slow convergence and high sensitivity to noise. To overcome these drawbacks, the hard thresholding pursuit (HTP)  algorithm \cite{F11} was proposed   by incorporating  orthogonal projection into  IHT. Among its variants, the heavy-ball-based HTP algorithm was devised by fusing the gradient and momentum directions \cite{SZZM23}. While gradient descent is sensitive to poor scaling, Newton's method is largely unaffected by it and usually exhibits faster local convergence \cite{NW06}. By integrating  Newton's method with hard thresholding, the Newton-step-based HTP algorithm was developed in \cite{MZ20}. Since this algorithm requires computing the inverse of the modified Hessian $(A^TA+\varepsilon I)^{-1}$, where the Hessian $A^TA$ is singular, $I$ is the identity matrix and $\varepsilon>0$, its computational cost becomes prohibitive for high-dimensional problems. To reduce the computational cost and circumvent the Hessian singularity, the compressed Newton-direction-based thresholding pursuit algorithm was introduced in \cite{MZ26}, which projects the  Newton step onto a lower-dimensional subspace and subsequently maps it back to the original space via diagonal regularization.   Furthermore, the combination of the restricted Newton and gradient directions was utilized  to design the  Newton HTP (NHTP)  algorithm \cite{ZXD21}. An analogous method has also been developed for sparse optimization problems with box constraints \cite{YL26}.

Additionally, full gradient descent coincides with partial gradient descent under certain conditions for sparse optimization.  In what follows, we take  HTP  as an example to illustrate this. HTP  is briefly summarized as follows:
\begin{align}
S^{p+1}&=\mathcal{L}_k(x^p+ d^p),\label{alg-HTP1}\\
x^{p+1}&=\arg \min_{z\in \mathbb{R}^n} \left\{ \frac{1}{2} {\|y-Az\|}_2^2: \mathrm{supp}(z)\subseteq S^{p+1}\right\},\label{alg-HTP2}
\end{align}
where $d^p$ is the negative gradient  at the current point $x^p$, and $\mathcal{L}_q(\cdot) (q\geq 1)$ is the index set for the  $q$ largest absolute entries of  a vector. Moreover, the supports of $x^p$ and $d^p$ are disjoint due to the first-order optimality condition of \eqref{alg-HTP2}. Thus, \eqref{alg-HTP1} is equivalent to $S^{p+1}=\mathcal{L}_k(x^p+ (d^p)_{\Omega_q})$ for any integers $q\in[k,n)$, where $\Omega_q=\mathcal{L}_q(d^p)$ and $(d^p)_{\Omega_q}$ is a $q$-sparse vector which retains the elements of $d^p$ with indices lying in ${\Omega_q}$ and sets  the others to zero. That is, the partial gradient (or sparse descent direction) can serve as a replacement for its full-gradient counterpart    in HTP. Inspired by this observation, we integrate the  sparse descent direction with the subspace Newton method to  propose the sparse descent subspace-Newton  thresholding (SDSNT) algorithm. We establish the global convergence of SDSNT under the restricted isometry property $\delta_{3k}\leq \sqrt{3/20}$. Moreover,  numerical results on synthetic and real data reveal that  SDSNT achieves comparable performance to several algorithms, including NHTP \cite{ZXD21}, HTP \cite{F11}, natural thresholding  pursuit (NTP)  \cite{ZL22},  subspace pursuit (SP) \cite{DM09},  and compressive sampling matching pursuit (CoSaMP) \cite{NT09} algorithms.

The paper is organized as follows. In Section \ref{sec-pre-alg}, we introduce some notation, definition, basic inequalities and the proposed algorithm. In Section \ref{sec-glob-converg}, the global convergence of SDSNT  is rigorously proved. In Section \ref{sec-exper}, numerical experiments on synthetic data and audio signal data are conducted. The conclusion is summarized in the final section.

\section{Preliminary and algorithm} \label{sec-pre-alg}

\subsection{Preliminary} \label{subsec-pre}
Let $N$ be the index set $\{1, \dots, n\}$. For  any set $ S\subseteq N,$ we write $|S|$  for the cardinality of $ S$, and $\overline{S} =N\setminus S $  for the complement of $ S$ in $N$.  Given $ x =(x_1, \dots, x_n)^T \in \mathbb{R}^n$, $\textrm{supp}(x)=\{i : x_i\neq 0\}$ is referred to as  the support of $x.$ Let $\mathcal{L}_s(x) $ be the index set for the  $s$ largest absolute entries of $x. $  If $|\textrm{supp}(x)|\leq s,$ then $x$ is said to be $s$-sparse. For $ x\in \mathbb{R}^n $ and $ S\subseteq N,$   we use  $x_{S} $ to denote two related vectors, depending on context: 1.  the masked vector in  $\mathbb{R}^n$ that preserves  $ x_i $ for $i \in S$ and sets $ x_i=0$ for $i\notin S;$ 2. the compressed  vector in  $\mathbb{R}^{|S|}$ formed by extracting only the entries $ x_i $ with $i \in S$ and discarding the rest.
 We write $A_S$ for the submatrix of $A \in \mathbb{R}^{m\times n}$ consisting of columns indexed by $S.$ Let $I$ denote the identity matrix, whose dimension can be determined from the context.

The following definition was originally proposed in \cite[definition 1.1] {CT05} and is essential for analyzing our algorithm.
\begin{Def}\cite{CT05} \label{Def-RIP-2005} Given a matrix $A\in \mathbb{R}^{m\times n}$ with $m\ll n$. If there exists a smallest constant   $ 0\leq \delta_s<1$ such that
 \begin{align}
 (1-\delta_s) \|x\|_2^2 \leq \|Ax\|_2^2\leq (1+\delta_s) \|x\|_2^2  \nonumber
  \end{align}
for any  $ s$-sparse vector $ x \in \mathbb{R}^n,$  the matrix $A$ is said to satisfy the restricted isometry property (RIP)  of order $s$. Moreover, $ \delta_s$ is known as the restricted isometry constant (RIC) of order $s$, and it   admits an equivalent form \cite{F11}:
 \begin{align}
  \delta_s=\max_{S\subseteq N,~|S|\leq s}\|A^T_SA_S-I\|_2. \nonumber
  \end{align}
\end{Def}

Two basic inequalities given in the following lemma will be frequently used in the next section.
\begin{Lem} \emph{\cite{F11}} \label{lem-basic-ineq}
Let $u \in \mathbb{R}^n$ and $ v \in \mathbb{R}^m$ be two vectors, $s \in N$ be a positive  integer and $W\subseteq N$ be an index set.
\begin{itemize}
\item [\rm (i)]   If  $ |W\cup  \mathrm{supp} (u)|\leq s$, then
$\| \left ( (I-A^ T A)u\right)_W\|_2\leq\delta_s {\| u \|}_2.$
\item [\rm (ii)]   If  $ |W|\leq s$, then
$\| \left ( A^ Tv\right)_W\|_2\leq\sqrt{1+\delta_s} {\| v \|}_2.$
\end{itemize}
\end{Lem}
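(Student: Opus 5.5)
Both parts rest on the matrix-norm characterization of the RIC in Definition \ref{Def-RIP-2005}, namely $\delta_s=\max_{|S|\le s}\|A_S^TA_S-I\|_2$, together with monotonicity of $\delta_s$ in $s$. In each case the plan is to restrict all vectors to a suitable index set of size at most $s$, so that $A$ only enters through a column submatrix whose norm is controlled by $\delta_s$.

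For (i), set $T=W\cup\mathrm{supp}(u)$, so $|T|\le s$. Since $\mathrm{supp}(u)\subseteq T$, we have $u=u_T$ and $Au=A_Tu_T$. Restricting to $W\subseteq T$ does not increase the Euclidean norm, so
\begin{align*}
\|((I-A^TA)u)_W\|_2\le\|((I-A^TA)u)_T\|_2=\|u_T-A_T^TA_Tu_T\|_2=\|(I-A_T^TA_T)u_T\|_2 .
\end{align*}
The norm characterization then gives $\|(I-A_T^TA_T)u_T\|_2\le\|A_T^TA_T-I\|_2\|u_T\|_2\le\delta_{|T|}\|u\|_2\le\delta_s\|u\|_2$. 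The last step uses $\delta_{|T|}\le\delta_s$, which holds because the maximum defining $\delta_s$ is taken over all sets of size at most $s$.

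For (ii), observe that $(A^Tv)_W=A_W^Tv$ as a compressed vector, and masking does not change the norm. Hence
\begin{align*}
\|(A^Tv)_W\|_2\le\|A_W^T\|_2\|v\|_2=\|A_W\|_2\|v\|_2 .
\end{align*}
It remains to bound $\|A_W\|_2^2=\|A_W^TA_W\|_2$. We have $\|A_W^TA_W\|_2\le\|I\|_2+\|A_W^TA_W-I\|_2\le 1+\delta_{|W|}\le1+\delta_s$. Equivalently, the upper RIP inequality applied to the $|W|$-sparse vector $z_W$ gives $\|A_Wz\|_2^2\le(1+\delta_s)\|z\|_2^2$ directly. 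Taking square roots yields (ii).

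There is no real obstacle here. The only points needing care are the identification of masked and compressed vectors when passing from $A$ to $A_T$ or $A_W$, and the monotonicity of $\delta_s$ in $s$, which follows from the max formulation.
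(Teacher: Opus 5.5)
Your proof is correct. The paper gives no proof of its own and only cites Foucart, whose argument uses the same key fact $\|A_T^TA_T-I\|_2\le\delta_s$ for $|T|\le s$, packaged through duality instead: for instance, $\|(A^Tv)_W\|_2^2=\langle v,A(A^Tv)_W\rangle\le\sqrt{1+\delta_s}\,\|v\|_2\|(A^Tv)_W\|_2$, and (i) follows similarly from the bilinear bound $|\langle a,(I-A^TA)b\rangle|\le\delta_s\|a\|_2\|b\|_2$. Your direct restriction to $T=W\cup\mathrm{supp}(u)$ followed by an operator-norm bound is an equally valid and slightly more streamlined version of that argument. The detour through $\delta_{|T|}$ is unnecessary, since $T$ is already admissible in the maximum defining $\delta_s$.
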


\subsection{Algorithm} \label{subsec-alg}

From Definition \ref{Def-RIP-2005}, under the RIP of order $s$, the  principal submatrix $A^T_{S}A_{S}$ of the Hessian is positive definiteness for any  $ S\subseteq N$ with $|S|= s$. Accordingly, we construct a sparse descent direction $d\in \mathbb{R}^n$ as follows:  $d_{\overline{S}}=0$ and $d_S=(A^T_{S}A_{S})^{-1}A_S^ T(y-Ax)$, with  $d_S$ being the Newton direction in the $s$-dimensional coordinate subspace associated with $S$.  It can be regarded as a special case of the compressed Newton search direction $d_{\mathrm{CN}}$ with $\gamma=0$ in \cite{MZ26}; however,  both the algorithmic framework and its convergence analysis in \cite{MZ26} were  established only for $\gamma>0$.
By embedding the sparse direction $d$ into the hard thresholding framework, we propose the SDSNT algorithm for sparse optimization problems.

\begin{algorithm}\label{alg-SDSNT}
\caption{Sparse descent subspace-Newton  thresholding  (SDSNT) algorithm}
Input $(A,y,k)$, stepsize $\alpha> 0$ and  initial point $x^0=0 $.
\begin{itemize}
\item[ S1.] Let  $r^p=A^ T(y-Ax^p)$,  $\Omega_k =\mathcal{L}_k(r^p),$ and $M^p =(A^T_{\Omega_k}A_{\Omega_k})^{-1}.$ Set
 \begin{align}\label{alg-direction}
u^p=x^p+\alpha\cdot d^p,
\end{align}
where
 \begin{align}\label{def-dp}
(d^p)_{\Omega_k}=M^p\cdot(r^p)_{\Omega_k},~~(d^p)_{\overline{\Omega_k}}=0.
\end{align}
\item[S2.] Let $S^{p+1}=\mathcal{L}_k(u^p)$, and
 \begin{align}\label{alg-pursuit}
x^{p+1}=\arg \min_{z\in \mathbb{R}^n} \left\{ \frac{1}{2}  {\| y-Az\|}_2^2: \mathrm{supp}(z)\subseteq S^{p+1}\right\}.
\end{align}
The above steps continue to be  looped unless a stopping criterion is reached.
\end{itemize}
\end{algorithm}

\section{Global convergence}\label{sec-glob-converg}
The following lemma can be obtained by Eq. (3.21) in \cite{F11} and Eq. (3.17) in \cite{SZZM23} directly.
\begin{Lem}\emph{\cite{F11,SZZM23}}  \label{Lem-projection-1}
 Given  vectors $x,~u^p\in \mathbb{R}^n$, the index sets $ S=\mathcal{L}_k(x)$ and $S^{p+1}=\mathcal{L}_k(u^p),$ and  the noisy measurements $y = Ax+ \nu\in \mathbb{R}^m$ of $x$, where $\nu$  is the noise vector. The iterate $ x^{p+1}$ generated by  \eqref{alg-pursuit}
obeys
\begin{align}
\left\lVert x^{p+1}-x_S\right\rVert_2
\leq \frac{1}{\sqrt{1-\delta^2_{2k}}}\left\lVert (x^{p+1}-x_S)_{\overline{S^{p+1}}}\right\rVert_2+\frac{ \sqrt{1+\delta_{k}}}{1-\delta_{2k}}\left\lVert \nu' \right\rVert_2,\nonumber
\end{align}
where $\nu'=\nu+Ax_{\overline{S}}$, and
\begin{align}
\|(x^{p+1}-x_S)_{\overline{ S^{p+1}}}\|_2\leq  \sqrt{2}\|(u^p-x_S)_{S^{p+1}\bigtriangleup S}\|_2,\nonumber
\end{align}
where  $S^{p+1}\bigtriangleup S:=(S^{p+1}\setminus S)\cup(S\setminus S^{p+1})$ is the symmetric difference of $S^{p+1}$ and $S.$
\end{Lem}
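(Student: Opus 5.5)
We prove the two inequalities separately, following the HTP analysis: both depend only on the orthogonal projection step \eqref{alg-pursuit} and on $S^{p+1}$ being the $k$ largest entries of $u^p$. The direction $d^p$ plays no role.

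\emph{First inequality.} Write $y=Ax_S+\nu'$ with $\nu'=\nu+Ax_{\overline S}$. The optimality condition of \eqref{alg-pursuit} gives $(A^T(y-Ax^{p+1}))_{S^{p+1}}=0$, hence
\begin{align}
\big(A^TA(x^{p+1}-x_S)\big)_{S^{p+1}}=(A^T\nu')_{S^{p+1}}. \nonumber
\end{align}
Set $w=x^{p+1}-x_S$, which is supported on $S^{p+1}\cup S$, a set of size at most $2k$. Then
\begin{align}
w_{S^{p+1}}=\big((I-A^TA)w\big)_{S^{p+1}}+(A^T\nu')_{S^{p+1}}. \nonumber
\end{align}
Lemma \ref{lem-basic-ineq}(ii) bounds the noise term by $\sqrt{1+\delta_k}\|\nu'\|_2$. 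For the first term we use the refined HTP argument. Split $w=w_{S^{p+1}}+w_{\overline{S^{p+1}}}$. The part $\big((I-A^TA)w_{S^{p+1}}\big)_{S^{p+1}}$ is bounded via $\|I-A_{S^{p+1}}^TA_{S^{p+1}}\|_2\le\delta_k$, or more sharply via an inner-product argument. The cross term $\big(A^TAw_{\overline{S^{p+1}}}\big)_{S^{p+1}}$ has disjoint supports, so it is bounded by $\delta_{2k}\|w_{\overline{S^{p+1}}}\|_2$. This gives
\begin{align}
\|w_{S^{p+1}}\|_2\le \frac{\delta_{2k}}{1-\delta_{2k}}\|w_{\overline{S^{p+1}}}\|_2+\frac{\sqrt{1+\delta_k}}{1-\delta_{2k}}\|\nu'\|_2. \nonumber
\end{align}
The main obstacle is getting the sharp constant $1/\sqrt{1-\delta_{2k}^2}$ rather than the crude sum. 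For this I will follow Foucart's Eq. (3.21). Take the inner product of the projection identity with $w_{S^{p+1}}$ to obtain
\begin{align}
\|w_{S^{p+1}}\|_2^2\le \delta_{2k}\|w_{S^{p+1}}\|_2\|w\|_2+\sqrt{1+\delta_k}\|\nu'\|_2\|w_{S^{p+1}}\|_2. \nonumber
\end{align}
Then use $\|w\|_2^2=\|w_{S^{p+1}}\|_2^2+\|w_{\overline{S^{p+1}}}\|_2^2$ and solve the resulting quadratic inequality for $\|w\|_2$. This yields the stated bound with the $1/\sqrt{1-\delta_{2k}^2}$ factor and the noise coefficient $\sqrt{1+\delta_k}/(1-\delta_{2k})$. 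The quadratic-solving step carries the algebra; we take it from \cite{F11} as cited.

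\emph{Second inequality.} Since $x^{p+1}$ vanishes off $S^{p+1}$, we have $w_{\overline{S^{p+1}}}=-(x_S)_{S\setminus S^{p+1}}$. Because $S^{p+1}$ collects the $k$ largest entries of $u^p$ and $|S|=|S^{p+1}|$, the sets $S\setminus S^{p+1}$ and $S^{p+1}\setminus S$ have equal size, so
\begin{align}
\|(u^p)_{S\setminus S^{p+1}}\|_2\le\|(u^p)_{S^{p+1}\setminus S}\|_2. \nonumber
\end{align}
Next write $(x_S)_{S\setminus S^{p+1}}=(x_S-u^p)_{S\setminus S^{p+1}}+(u^p)_{S\setminus S^{p+1}}$. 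On $S^{p+1}\setminus S$ we have $x_S=0$, so $(u^p)_{S^{p+1}\setminus S}=(u^p-x_S)_{S^{p+1}\setminus S}$. The triangle inequality then gives
\begin{align}
\|w_{\overline{S^{p+1}}}\|_2\le \|(u^p-x_S)_{S\setminus S^{p+1}}\|_2+\|(u^p-x_S)_{S^{p+1}\setminus S}\|_2. \nonumber
\end{align}
Finally, $a+b\le\sqrt2\sqrt{a^2+b^2}$, and the two index sets are disjoint with union $S^{p+1}\bigtriangleup S$. This gives the factor $\sqrt2\|(u^p-x_S)_{S^{p+1}\bigtriangleup S}\|_2$, as in \cite[Eq. (3.17)]{SZZM23}.
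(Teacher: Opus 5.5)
Your proposal is correct and reproduces the arguments that the paper delegates to \cite{F11} and \cite{SZZM23}. For the first bound, the optimality condition of \eqref{alg-pursuit} together with Lemma~\ref{lem-basic-ineq} gives $\|w_{S^{p+1}}\|_2\le\delta_{2k}\|w\|_2+\sqrt{1+\delta_k}\|\nu'\|_2$, and solving the quadratic inequality coming from $\|w\|_2^2=\|w_{S^{p+1}}\|_2^2+\|w_{\overline{S^{p+1}}}\|_2^2$ gives the result; the second bound is the thresholding comparison on the equal-size sets $S\setminus S^{p+1}$ and $S^{p+1}\setminus S$. Your preliminary ``crude'' splitting bound and the inner-product step are unnecessary detours, since taking norms directly in the identity for $w_{S^{p+1}}$ already yields the estimate you need.
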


The error bound for SDSNT  is established in the following theorem.
 \begin{Thm}\label{thm-error-bound}
 Suppose that the RIC, $\delta_{3k},$ of the matrix $A$ and the stepsize $\alpha$ satisfy that
 \begin{align}\label{RIC-bound}
\delta_{3k}\leq \sqrt{\frac{3}{20}},~~~1-\frac{C_1-C_3}{1+\sqrt{2}\delta_{3k}-C_3}<\alpha<1+\frac{C_1-C_3}{\sqrt{2}\delta_{3k}C_2+1},
\end{align}
where the constants $C_i, i=1,2,3$ are given as follows,
 \begin{align}\label{Constants}
C_1=\sqrt{\frac{1-\delta^2_{3k}}{2}},~~~C_2=\frac{\delta_{3k}}{1-\delta_{3k}},~~~C_3=\sqrt{2}\delta_{3k}\sqrt{1+C_2^2},
\end{align}
and $C_3<C_1<1$. Let $y=Ax+\nu$ be the measurements of $ x$ with errors $ \nu . $
Then the iterates $\{ x^p\}$ generated by SDSNT  satisfies
\begin{align}\label{error-bound}
\|x^{p+1} - x_S \|_2 \leq \rho\|x^p - x_S \|_2+C\|\nu'\|_2,
\end{align}
where  $S=\mathcal{L}_k(x)$, $\nu'=\nu+Ax_{\overline{S}}$, and $\rho,~C$ are given  as
\begin{align}\label{rho-C}
\rho=\frac{\sqrt{2}}{\sqrt{1-\delta^2_{2k}}}\left(|1-\alpha|+\sqrt{2}\delta_{3k}C_4\right)<1,
~~~~C=\frac{1}{\sqrt{1-\delta_{3k}}}\cdot\left(2C_4 +\frac{\sqrt{2}\alpha}{1-\delta_k} \right) +\frac{\sqrt{1+\delta_{k}}}{1-\delta_{2k}},
\end{align}
where $C_4=\sqrt{\left(\frac{\alpha\delta_{3k}}{1-\delta_k}\right)^2+1}.$
\end{Thm}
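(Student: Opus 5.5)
Combining the two inequalities of Lemma \ref{Lem-projection-1} gives
\[
\|x^{p+1}-x_S\|_2\le \frac{\sqrt2}{\sqrt{1-\delta_{2k}^2}}\,\|(u^p-x_S)_{S^{p+1}\bigtriangleup S}\|_2+\frac{\sqrt{1+\delta_k}}{1-\delta_{2k}}\|\nu'\|_2 .
\]
So the whole task is to bound $\|(u^p-x_S)_{W}\|_2$, with $W:=S^{p+1}\bigtriangleup S$ and $|W|\le 2k$, by $(|1-\alpha|+\sqrt2\delta_{3k}C_4)\|x^p-x_S\|_2$ plus a multiple of $\|\nu'\|_2$. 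Write $e^p=x^p-x_S$, which is $2k$-sparse, and $y-Ax^p=-Ae^p+\nu'$, so $r^p=-A^TAe^p+A^T\nu'$. Then
\[
u^p-x_S=(1-\alpha)e^p+\alpha(e^p+d^p),
\]
and the $(1-\alpha)e^p$ part contributes $|1-\alpha|\,\|e^p\|_2$. It remains to bound $\|(e^p+d^p)_W\|_2$.

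Set $\Omega=\Omega_k$. By \eqref{def-dp}, $d^p_\Omega=M^p(r^p)_\Omega$ and $d^p$ vanishes off $\Omega$. I split $W$ into $W\cap\Omega$ and $W\setminus\Omega$.

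\textbf{The part $W\setminus\Omega$.} Here only $e^p$ appears. The optimality condition of \eqref{alg-pursuit} gives $(r^p)_{S^p}=0$. Together with the choice of $\Omega$ as the $k$ largest entries of $r^p$, this means the mass of $e^p$ outside $\Omega$ lies on $S\setminus\Omega$. I would compare $(r^p)_{S\setminus\Omega}$ with $(r^p)_{\Omega\setminus S}$ by the usual HTP swap argument. On $S\setminus\Omega$ we have $r^p=-e^p+((I-A^TA)e^p)+A^T\nu'$, and Lemma \ref{lem-basic-ineq} bounds the perturbation terms by $\delta_{3k}\|e^p\|_2$ and $\sqrt{1+\delta_{2k}}\|\nu'\|_2$. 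This yields $\|e^p_{\overline\Omega}\|_2\lesssim \sqrt2\,\delta_{3k}\|e^p\|_2+c\|\nu'\|_2$.

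\textbf{The part $W\cap\Omega$.} Write
\[
e^p_\Omega+d^p_\Omega=M^p\bigl[A_\Omega^TA_\Omega e^p_\Omega+(r^p)_\Omega\bigr]=M^p\bigl[-A_\Omega^TA_{\overline\Omega}e^p_{\overline\Omega}+A_\Omega^T\nu'\bigr].
\]
Using $\|M^p\|_2\le 1/(1-\delta_k)$, $\|A_\Omega^TA_{\overline\Omega}e^p_{\overline\Omega}\|_2\le\delta_{3k}\|e^p_{\overline\Omega}\|_2$ (disjoint supports, total size at most $3k$), and Lemma \ref{lem-basic-ineq}(ii) for the noise, this part is at most $\frac{\delta_{3k}}{1-\delta_k}\|e^p_{\overline\Omega}\|_2+\frac{\sqrt{1+\delta_k}}{1-\delta_k}\|\nu'\|_2$.

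\textbf{Recombining.} Group the two pieces orthogonally: $\alpha\cdot(\text{the }\Omega\text{ part})$ and the $\overline\Omega$ part. Their norms combine through the square-root-of-sum-of-squares factor
\[
\sqrt{\left(\tfrac{\alpha\delta_{3k}}{1-\delta_k}\right)^2+1}=C_4,
\]
multiplying $\|e^p_{\overline\Omega}\|_2\le\sqrt2\delta_{3k}\|e^p\|_2+\dots$. This produces $\rho$ exactly. Collecting the noise terms, after bounding $\sqrt{1+\delta}$ factors crudely by $1/\sqrt{1-\delta_{3k}}$, should give the constant $C$.

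\textbf{Hardest step.} The main obstacle is the sharp bound $\|e^p_{\overline\Omega}\|_2\le\sqrt2\delta_{3k}\|e^p\|_2+\frac{2}{\sqrt{1-\delta_{3k}}}\|\nu'\|_2$ with exactly these constants. My first attempt is the comparison $\|(r^p)_{S\setminus\Omega}\|_2\le\|(r^p)_{\Omega\setminus S}\|_2$ (both sets have equal size), followed by the triangle inequality and $\|a\|+\|b\|\le\sqrt2\|(a,b)\|$.

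Finally, I would check that $\rho<1$. With $\delta_{2k}\le\delta_{3k}$, the condition $\rho<1$ reads $|1-\alpha|+\sqrt2\delta_{3k}C_4<C_1$. Bounding $C_4$ in terms of $C_2$ and $\alpha$, the two sides of the interval in \eqref{RIC-bound} correspond to $\alpha<1$ and $\alpha>1$. The requirement $C_3<C_1$ at $\alpha=1$ is guaranteed by $\delta_{3k}\le\sqrt{3/20}$.
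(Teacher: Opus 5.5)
Your route is the paper's: Lemma \ref{Lem-projection-1}, a split into $\Omega_k$ and its complement, $M^p$ plus the off-diagonal Gram block on $\Omega_k$, the swap $\|(r^p)_{S\setminus\Omega_k}\|_2\le\|(r^p)_{\Omega_k\setminus S}\|_2$ off $\Omega_k$, and orthogonal recombination giving $C_4$. But the complement step rests on a false claim, and as written it does not go through.

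From $(r^p)_{Q^p}=0$ with $Q^p=\mathrm{supp}(x^p)\subseteq S^p$, and $\Omega_k=\mathcal{L}_k(r^p)$, what follows is $Q^p\cap\Omega_k=\emptyset$. So the \emph{entire} iterate $x^p$ lives off $\Omega_k$, and $e^p_{\overline{\Omega_k}}$ is supported on $(S\setminus\Omega_k)\cup(Q^p\setminus S)$, not on $S\setminus\Omega_k$. (What is true is $e^p_{\Omega_k}=-(x_S)_{\Omega_k}$.) Your swap argument controls only $e^p_{S\setminus\Omega_k}$. The block $e^p_{Q^p\setminus S}=x^p_{Q^p\setminus S}$ is never bounded, and the swap cannot bound it.

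The missing observation is that orthogonality also controls $x^p$ on its own support. Since $r^p$ vanishes on $Q^p$, $e^p_{Q^p}=(e^p+r^p)_{Q^p}=((I-A^TA)e^p+A^T\nu')_{Q^p}$. The paper uses this as follows:
\begin{itemize}
\item It writes $e^p_{\overline{\Omega_k}}=(e^p+r^p)_{(S\cup Q^p)\setminus\Omega_k}-(r^p)_{S\setminus\Omega_k}$.
\item It applies the swap to the second term, together with $(r^p)_{\Omega_k\setminus S}=(e^p+r^p)_{\Omega_k\setminus S}$.
\item It uses $\|a\|_2+\|b\|_2\le\sqrt2\|(a,b)\|_2$ over the disjoint sets $(S\cup Q^p)\setminus\Omega_k$ and $\Omega_k\setminus S$.
\item It applies Lemma \ref{lem-basic-ineq} on $S\cup Q^p\cup\Omega_k$, of size at most $3k$, which is where $\delta_{3k}$ comes from.
\end{itemize}
The result is $\|e^p_{\overline{\Omega_k}}\|_2\le\sqrt2\delta_{3k}\|e^p\|_2+\sqrt{2(1+\delta_{3k})}\|\nu'\|_2$.

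Three smaller points need fixing to land on the stated $\rho$ and $C$.

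\textbf{Where to split off $(1-\alpha)$.} The splitting $u^p-x_S=(1-\alpha)e^p+\alpha(e^p+d^p)$ may only be used on $\Omega_k$. If you bound by $|1-\alpha|\|e^p\|_2+\alpha\|(e^p+d^p)_W\|_2$, as your first paragraph says, the complement carries weight $\alpha$. The recombination factor then becomes $\alpha\sqrt{(\delta_{3k}/(1-\delta_k))^2+1}$, which exceeds $C_4$ when $\alpha>1$. The right decomposition is $(u^p-x_S)_W=(1-\alpha)e^p_{W\cap\Omega_k}+[\alpha(e^p+d^p)_{W\cap\Omega_k}+e^p_{W\setminus\Omega_k}]$, whose bracketed pieces are orthogonal. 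This is what your recombination paragraph tacitly uses.

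\textbf{Noise constant.} The constant $2/\sqrt{1-\delta_{3k}}$ you target for $\|e^p_{\overline{\Omega_k}}\|_2$ would leave an extra factor $\sqrt2/\sqrt{1-\delta_{2k}^2}$ in $C$. Use $\sqrt{2(1+\delta_{3k})}$ instead, and absorb $\sqrt{1+\delta_{3k}}/\sqrt{1-\delta_{2k}^2}\le1/\sqrt{1-\delta_{3k}}$ only at the end.

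\textbf{The bound $C_3<C_1$.} Your $\rho<1$ sketch matches the paper's, which uses piecewise-linear bounds on $\sqrt{\alpha^2C_2^2+1}$ for $\alpha\le1$ and $\alpha>1$. However, you assert $C_3<C_1$ rather than verify it. It should be checked, e.g.\ via $C_3/C_1=F(\delta_{3k})$, where $F(r)=\frac{2r}{\sqrt{1-r^2}}\sqrt{(r/(1-r))^2+1}$ is increasing and $F(\sqrt{3/20})<1$.
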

\emph{Proof.}  Denote $Q^p:=\mathrm{supp}(x^p)$ and $W:=S\cup Q^p\cup\Omega_k$, where $S=\mathcal{L}_k(x)$, $\Omega_k =\mathcal{L}_k(r^p),$ $|Q^p|\leq k$ and $|S|=|\Omega_k|=k$. From SDSNT algorithm, we have
$S^{p+1}\subseteq\mathrm{supp}(u^p)\subseteq Q^p\cup \Omega_k,$ which in turn implies
 $(S^{p+1}\setminus S)\cup(S\setminus S^{p+1})\subseteq S^{p+1}\cup S \subseteq W$.
 It follows from Lemma \ref{Lem-projection-1} that
 \begin{align} \label{xp1-x-up-xnorm-2}
\|x^{p+1} - x_S \|_2
\leq\frac{\sqrt{2}}{\sqrt{1-\delta^2_{2k}}}\|(u^p  - x_S )_{W}\|_2+\frac{ \sqrt{1+\delta_{k}}}{1-\delta_{2k}}\left\| \nu' \right\|_2.
\end{align}
Next, we aim to estimate the upper bound of  $\|(u^p  - x_S )_{W}\|_2$ by $\|x^p-x_S\|_2$ and ${\| \nu'\|}_2$. Note that $W$ can be split into two disjoint sets $(S\cup Q^p)\setminus\Omega_k$ and $\Omega_k$.

{\bf Firstly, we estimate the upper bound of $\|(u^p-x_S)_{(S\cup Q^p)\setminus\Omega_k}\|_2$.} From the orthogonality of \eqref{alg-pursuit}, we see that $Q^p\cap\mathrm{supp}(r^p)=\emptyset$, which implies that $(r^p )_{Q^p}=0$ and $(x^p )_{\Omega_k}=0$. From \eqref{alg-direction} and \eqref{def-dp}, we obtain
\begin{align} \label{up-x_COk}
(u^p - x_S )_{(S\cup Q^p)\setminus\Omega_k}&=(x^p - x_S )_{(S\cup Q^p)\setminus\Omega_k}\nonumber \\
&=(x^p - x_S+r^p )_{(S\cup Q^p)\setminus\Omega_k}-(r^p )_{(S\cup Q^p)\setminus\Omega_k}\nonumber \\
&=(x^p - x_S+r^p )_{(S\cup Q^p)\setminus\Omega_k}-(r^p )_{S\setminus\Omega_k},
\end{align}
where the last equality is given by $(r^p )_{Q^p}=0$. Since $\Omega_k =\mathcal{L}_k(r^p)$ and  $|S|=|\Omega_k|=k$, we get
\begin{align} \label{rp_COk-norm}
\|(r^p )_{S\setminus\Omega_k}\|_2\leq \|(r^p )_{\Omega_k\setminus S}\|_2=\|(x^p-x_S+r^p )_{\Omega_k\setminus S}\|_2,
\end{align}
where the equality results from   $(x^p )_{\Omega_k\setminus S}=0$ and $(x_S )_{\Omega_k\setminus S}=0$. Merging \eqref{up-x_COk} and \eqref{rp_COk-norm} leads to
\begin{align} \label{up-x_COk-norm-1}
\|(u^p - x_S )_{(S\cup Q^p)\setminus\Omega_k}\|_2
&\leq\|(x^p - x_S+r^p )_{(S\cup Q^p)\setminus\Omega_k}\|_2+\|(x^p-x_S+r^p )_{\Omega_k\setminus S}\|_2\nonumber \\
&\leq\sqrt{2}\sqrt{\|(x^p - x_S+r^p )_{(S\cup Q^p)\setminus\Omega_k}\|_2^2+\|(x^p-x_S+r^p )_{\Omega_k\setminus S}\|_2^2}\nonumber \\
&\leq\sqrt{2}\|(x^p - x_S+r^p )_{W}\|_2,
\end{align}
where the last inequality results from $((S\cup Q^p)\setminus\Omega_k)\cup (\Omega_k\setminus S)\subseteq W$ and $((S\cup Q^p)\setminus\Omega_k)\cap (\Omega_k\setminus S)=\emptyset$.
Since $y=Ax+\nu$, we have
\begin{align}\label{rp}
r^p=A^ T(y-Ax^p)=A^T\left[A(x_S-x^p)+\nu'\right],
\end{align}
where $\nu'=\nu+Ax_{\overline{S}}$.
Since $\mathrm{supp}(x^p - x_S)\subseteq W$ and $|W|\leq 3k$,
from \eqref{rp} and Lemma \ref{lem-basic-ineq}, we obtain
\begin{align} \label{xp-x+rp_norm}
\|(x^p - x_S+r^p)_W\|_2&=\|[(I-A^TA)(x^p-x_S)+A^T\nu']_W\|_2\nonumber \\
&\leq\|[(I-A^TA)(x^p-x_S)]_W\|_2+\|(A^T\nu')_W\|_2\nonumber \\
&\leq \delta_{3k}\|x^p-x_S\|_2+\sqrt{1+\delta_{3k}} {\| \nu'\|}_2.
\end{align}
Combing \eqref{up-x_COk-norm-1}  and \eqref{xp-x+rp_norm}, we get
\begin{align} \label{up-x_COk-norm-2}
\|(u^p - x_S )_{(S\cup Q^p)\setminus\Omega_k}\|_2
\leq \sqrt{2}\delta_{3k}\|x^p-x_S\|_2+\sqrt{2(1+\delta_{3k})} \| \nu'\|_2.
\end{align}

{\bf Secondly, we estimate the upper bound of $\| (u^p - x_S )_{\Omega_k}\|_2$.}
  From \eqref{alg-direction}, \eqref{def-dp} and \eqref{rp}, we obtain
\begin{align} \label{up-x_Ok}
(u^p - x_S )_{\Omega_k}=&(x^p - x_S )_{\Omega_k}+\alpha M^p(r^p)_{\Omega_k}\nonumber \\
 =&(x^p - x_S )_{\Omega_k}+\alpha M^p A^T_{\Omega_k}\left[A(x_S-x^p)+\nu'\right]\nonumber \\
  =&(x^p - x_S )_{\Omega_k}+\alpha M^p A^T_{\Omega_k}[A_{(S\cup Q^p)\setminus\Omega_k}(x_S-x^p)_{(S\cup Q^p)\setminus\Omega_k}+A_{\Omega_k}(x_S-x^p)_{\Omega_k}+\nu']\nonumber \\
    =&(1-\alpha)(x^p - x_S )_{\Omega_k}+\alpha M^p[A^T_{\Omega_k}A_{(S\cup Q^p)\setminus\Omega_k}(x_S-x^p)_{(S\cup Q^p)\setminus\Omega_k}+A^T_{\Omega_k}\nu'],
\end{align}
where the third equality is given by $\mathrm{supp}(x_S-x^p)\subseteq W$ and $W=[(S\cup Q^p)\setminus\Omega_k]\cup \Omega_k$.
Since $\Omega_k\cap[(S\cup Q^p)\setminus\Omega_k]=\emptyset$, thus $A^T_{\Omega_k}A_{(S\cup Q^p)\setminus\Omega_k}$ is a submatrix of $A^T_{W}A_{W}-I$. Hence,
\begin{align}\label{inq-submartrix-norm}
\|A^T_{\Omega_k}A_{(S\cup Q^p)\setminus\Omega_k}\|_2\leq\|A^T_{W}A_{W}-I\|_2\leq \delta_{3k},
\end{align}
where the last inequality results from Definition \ref{Def-RIP-2005}  with $|W|\leq 3k$. From Definition \ref{Def-RIP-2005},  the eigenvalues of  $A^T_{\Omega_k}A_{\Omega_k}$ lie in  $[1-\delta_k,1+\delta_k]$,  and thus those of  $M^p=(A^T_{\Omega_k}A_{\Omega_k})^{-1}$ belong to  $[ \frac{1}{1+\delta_k}, \frac{1}{1-\delta_k}]$. Hence,
$\|M^p\|_2\leq \frac{1}{1-\delta_k}.$
Combining this bound with \eqref{up-x_Ok} and \eqref{inq-submartrix-norm}, we arrive at
\begin{align}\label{up-x_Ok-norm}
\| (u^p - x_S )_{\Omega_k}\|_2
 &\leq|1-\alpha|\cdot\|(x^p - x_S )_{\Omega_k}\|_2+\alpha\|M^p\|_2(\|A^T_{\Omega_k}A_{(S\cup Q^p)\setminus\Omega_k}\|_2\cdot\|(x^p-x_S)_{(S\cup Q^p)\setminus\Omega_k}\|_2+\|A^T_{\Omega_k}\nu'\|_2)\nonumber \\
&\leq |1-\alpha|\cdot\|(x^p - x_S )_{\Omega_k}\|_2+ \frac{\alpha}{1-\delta_k}\left(\delta_{3k}\|(x^p-x_S)_{(S\cup Q^p)\setminus\Omega_k}\|_2+\|(A^T\nu')_{\Omega_k}\|_2\right)\nonumber\\
&\leq |1-\alpha|\cdot\|x^p - x_S \|_2+ \frac{\alpha}{1-\delta_k}\left(\delta_{3k}\|(u^p-x_S)_{(S\cup Q^p)\setminus\Omega_k}\|_2+\sqrt{1+\delta_{3k}} {\| \nu'\|}_2\right),
\end{align}
where the last inequality is obtained by the first equality of \eqref{up-x_COk} and Lemma \ref{lem-basic-ineq} with $|\Omega_k|<3k$.
 For  $ a,b \in \mathbb{R}^2 $ and $\xi,\eta \geq0 $, the following inequality holds:
 \begin{align}\label{inequality-01}
 \sum_{i=1}^2(a_i\xi+b_i\eta)^2=\|a\|_2^2\xi^2+\|b\|_2^2\eta^2+2<a,b>\xi\eta\leq(\|a\|_2\xi+\|b\|_2\eta)^2,
 \end{align}
 where the last inequality results form $|<a,b>|\leq\|a\|_2\|b\|_2$.
 Combining \eqref{up-x_Ok-norm} and \eqref{inequality-01}, we find
\begin{align}
\|(u^p - x_S )_{W}\|_2
&=\sqrt{\|(u^p - x_S )_{\Omega_k}\|_2^2+\|(u^p - x_S )_{(S\cup Q^p)\setminus\Omega_k}\|_2^2} \nonumber \\
&\leq C_4\|(u^p - x_S )_{(S\cup Q^p)\setminus\Omega_k}\|_2
+|1-\alpha|\cdot\|x^p - x_S \|_2
+\frac{\alpha\sqrt{1+\delta_{3k}}}{1-\delta_k} {\| \nu'\|}_2,\nonumber
\end{align}
where $C_4$ is given in Theorem \ref{thm-error-bound}. It follows from \eqref{up-x_COk-norm-2}  that
\begin{align} \label{up-x-norm}
\|(u^p - x_S )_{W}\|_2
\leq \left(|1-\alpha|+\sqrt{2}\delta_{3k}C_4\right)\cdot\|x^p-x_S\|_2
 +\left(\sqrt{2}C_4+\frac{\alpha}{1-\delta_k} \right)\sqrt{1+\delta_{3k}} \cdot\|\nu'\|_2.
\end{align}
Substituting \eqref{up-x-norm} into \eqref{xp1-x-up-xnorm-2}, we get
\begin{align} \label{xp1-x-norm}
\|x^{p+1} - x_S \|_2
\leq\rho\|x^p-x_S\|_2 +\left[\frac{\sqrt{1+\delta_{3k}}}{\sqrt{1-\delta^2_{2k}}}\cdot\left(2C_4 +\frac{\sqrt{2}\alpha}{1-\delta_k} \right) +\frac{\sqrt{1+\delta_{k}}}{1-\delta_{2k}}\right]\left\| \nu'\right\|_2
\leq\rho\|x^p-x_S\|_2 +C\left\| \nu'\right\|_2,
\end{align}
where $\rho,~C$ are given in \eqref{rho-C} and the last inequality is given by $\delta_{2k}\leq \delta_{3k}$. It means that \eqref{error-bound} holds.

{\bf Finally, we prove that $\rho<1$ under the conditions in  \eqref{RIC-bound}.}  Denote $F(r):=\frac{2r}{\sqrt{1-r^2}}\sqrt{\left(\frac{r}{1-r}\right)^2+1},~~r\in[0,1)$. Since $F(r)$ is strictly increasing on $[0,1)$, it follows from \eqref{Constants} and \eqref{RIC-bound} that
\begin{align}
\frac{C_3}{C_1}=F(\delta_{3k})\leq F(\sqrt{3/20})<1,\nonumber
\end{align}
where the last inequality is easily verified and thus omitted.  Hence, $C_3<C_1<1$, which implies that the second inequality in \eqref{RIC-bound} is well defined.
Since $\delta_k\leq\delta_{2k}\leq\delta_{3k}$, \eqref{rho-C} implies that
\begin{align}\label{rho-bound-1}
\rho\leq\frac{\sqrt{2}}{\sqrt{1-\delta^2_{3k}}}\left(|1-\alpha|+\sqrt{2}\delta_{3k}\sqrt{\left(\frac{\alpha\delta_{3k}}{1-\delta_{3k}}\right)^2+1}\right) =\frac{1}{C_1}\left(|1-\alpha|+\sqrt{2}\delta_{3k}\sqrt{\alpha^2C_2^2+1}\right),
\end{align}
where the equality is given by \eqref{Constants}.
It is easy to check that the following inequality
\begin{align}\label{inq-sqrt-linear}
\sqrt{\alpha^2t^2+1}\leq\left\{
\begin{array}{ll}
1+(\sqrt{1+t^2}-1)\alpha,~~~&0<\alpha\leq 1,\vspace{0.1cm}\\
t\alpha+\sqrt{1+t^2}-t,~~~&\alpha> 1,\\
\end{array}\right.
\end{align}
holds for any $t\geq 0$.  Combing \eqref{rho-bound-1} and \eqref{inq-sqrt-linear} with $t=C_2$ leads to
\begin{align}\label{rho-bound-2}
\rho&\leq\frac{1}{C_1}\cdot\left\{
\begin{array}{ll}
1-\alpha+\sqrt{2}\delta_{3k}\left[1+(\sqrt{1+C_2^2}-1)\alpha\right],~~~&0<\alpha\leq 1,\vspace{0.1cm}\\
\alpha-1+\sqrt{2}\delta_{3k}\left(C_2\alpha+\sqrt{1+C_2^2}-C_2\right),~~~&\alpha> 1,\\
\end{array}\right.\nonumber\\
&=\frac{1}{C_1}\cdot\left\{
\begin{array}{ll}
\left(1+\sqrt{2}\delta_{3k}-C_3\right)(1-\alpha)+C_3,~~~&0<\alpha\leq 1,\vspace{0.1cm}\\
(\sqrt{2}\delta_{3k}C_2+1)(\alpha-1)+C_3,~~~&\alpha> 1,\\
\end{array}\right.\nonumber
\end{align}
where the equality is given  by the definition of $C_3$ in \eqref{Constants}.
It follows from \eqref{RIC-bound} that $\rho<1$.
 ~~~~~ $\Box $

The global convergence of SDSNT follows directly from Theorem \ref{thm-error-bound} under certain conditions; see Remark \ref{rem-glob-converg}.
 \begin{Rem}\label{rem-glob-converg}
 If $x$ is a $k$-sparse vector and its measurement vector $y$ is accurate, i.e., $x_S=x$ and $\nu=0$, then \eqref{error-bound} reduces to $\|x^{p+1} - x\|_2 \leq \rho\|x^p - x \|_2$, where $\rho<1$ is given by \eqref{rho-C}.  Consequently,  the iterates $\{ x^p\}$ generated by SDSNT  converges to $x$.
 \end{Rem}

\section{Experiments}\label{sec-exper}
In the following experiments, we compare the proposed algorithm with several  state-of-the-art  algorithms, including NHTP \cite{ZXD21}, HTP \cite{F11}, NTP \cite{ZL22}, SP \cite{DM09},  and CoSaMP \cite{NT09}, in terms of  random sparse vector recovery and audio signal reconstruction. The stepsizes of SDSNT and  HTP  are set to $4$ and  $1$, respectively. The algorithm parameters of  NTP and NHTP are set identically to the original paper.
 Throughout this section, the problem dimension $n$ and the number of  measurements $m$ are fixed at $n=10000$ and $m=3000$. The measurement matrix $A\in\mathbb{R}^{m\times n}$ is  randomly generated, and its elements are sampled  independently from $\mathcal{N}(0, \frac{1}{m}).$

\subsection{Recovery of random sparse vector}
\vspace{-0.4cm}
\begin{figure}[htbp]
   \centering
  \subfigure[Success rate]{
\begin{minipage}[t]{0.45\linewidth}
  \includegraphics[interpolate=true, width=\textwidth,height=0.7\textwidth]{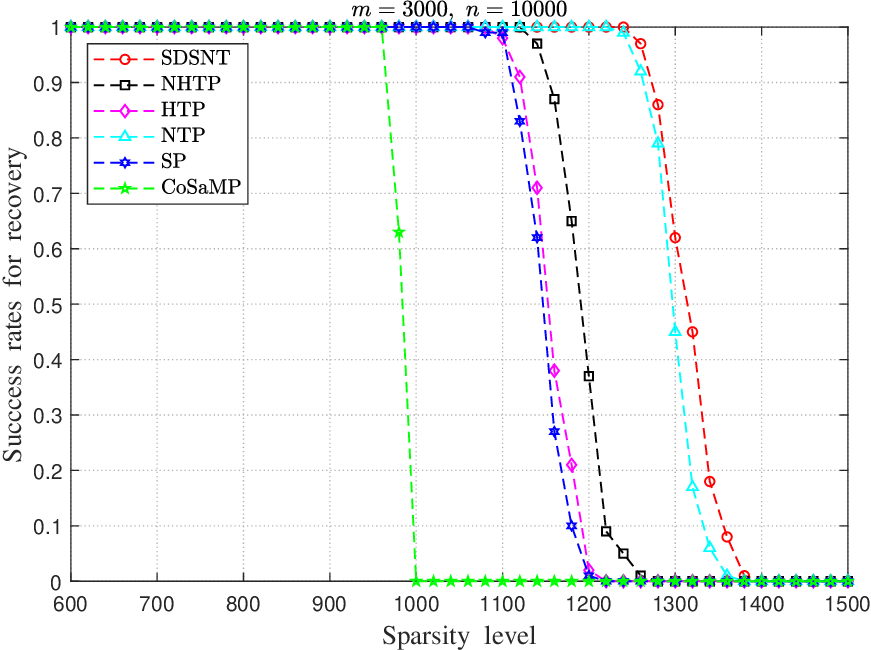}
   \end{minipage}} \hspace{0.2cm}
    \subfigure[Average CPU Time]{
\begin{minipage}[t]{0.45\linewidth}
  \includegraphics[interpolate=true,width=\textwidth,height=0.7\textwidth]{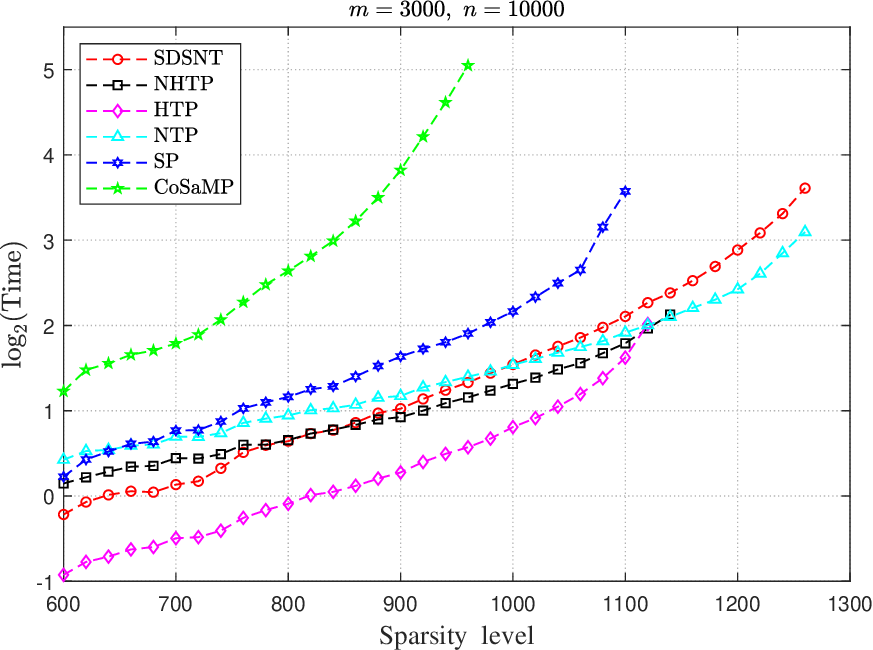}
   \end{minipage}
   }\vspace{-0.4cm}
   \caption{Success rates and average  CPU times of the algorithms}
   \label{Suc-Time}
   \end{figure}

We first evaluate  the recovery performance of the algorithms on 100 random instances for each sparsity level $k,$  ranging from 600 to 1500  with stepsize 20. For each  instance, the noisy measurements $y\in\mathbb{R}^{m}$ of a $k$-sparse vector $x^*\in\mathbb{R}^{n}$ are generated by $y=Ax^*+\epsilon h$, where $h\in\mathbb{R}^{m}$ is a normalized Gaussian noise vector and $\epsilon=0.01$. The nonzero positions of $x^*$ are uniformly distributed, with values drawn independently from $\mathcal{N}(0, 1).$ We aim to recover $x^*$ from the data $(y,A)$, and a recovery is deemed successful if the solution  $x^p$  produced by an algorithm satisfies $\|x^p-x^*\|_2/\|x^*\|_2\leq 10^{-3},$ which is also adopted as the stopping criterion. All algorithms are allowed up to 50 iterations. Fig. \ref{Suc-Time} illustrates the success rates and average runtimes of   all algorithms under comparison. Notably,  the average CPU time for successful recovery is recorded only for algorithms with a success rate of at least 0.9. Fig. \ref{Suc-Time}(a) indicates that SDSNT and NTP exhibit comparable recovery performance and both outperform the other four algorithms, with a marked advantage over  CoSaMP. On the other hand, Fig. \ref{Suc-Time}(b) shows that SDSNT is always faster than  CoSaMP and SP, but slower than HTP. For $k\leq840$, SDSNT exhibits lower computational cost than NHTP; however, when $k$  increases beyond this threshold, its average CPU time gradually surpasses  that of NHTP.  A similar pattern holds for NTP with  $k\leq 980$. The above results reveal that SDSNT achieves superior computational efficiency at low sparsity levels.

   \subsection{Reconstruction of audio signal}
In this subsection, we investigate the reconstruction of an audio signal $\widehat{z}\in\mathbb{R}^{ n}$ depicted in Fig. \ref{SNR-Time}(b). This signal is transformed into a compressible vector $\widehat{x}\in\mathbb{R}^{ n}$ via the orthogonal  transform $\mathbf{W}=\mathbf{W_2}\circ\mathbf{W_1}$, i.e., $\widehat{x}=\mathbf{W}\widehat{z}$. Here, $\mathbf{W_1}$ stands for  the  discrete cosine transform, and $\mathbf{W_2}$ denotes  the three-level discrete wavelet transform based on the  \emph{`sym12'} wavelet with periodic extension. Given measurements $y=(A\mathbf{W})\widehat{z}\in\mathbb{R}^{ m}$  of the signal $\widehat{z}$, we reconstruct a $k$-sparse vector $\widetilde{x}\in\mathbb{R}^{ n}$  from  the data $(y, A)$ using an algorithm with $k=m/3$, where $\widetilde{x}$ is the best $k$-term approximation of $\widehat{x}$. Consequently,  the reconstructed signal $\widetilde{z}$ is given by  $\widetilde{z}=\mathbf{W}^{-1}\widetilde{x}$.  We use the signal-to-noise ratio (SNR), defined by
$\mathrm{SNR}:=20 \log_{10}(\|\widehat{z}\|_2/\|\widehat{z}-\widetilde{z}\|_2),$
to evaluate the quality of the reconstructed signal. Each algorithm is allowed a maximum CPU time of 10 seconds. Fig. \ref{SNR-Time}(a) shows the SNR versus  the CPU time for all compared algorithms, with the per-iteration runtime recorded for each algorithm. The maximum SNR values attained by SDSNT, NHTP, HTP, NTP, SP and  CoSaMP
are approximately 26, 25, 25, 26, 23 and 14, respectively. Except for NTP, SDSNT achieves a maximum SNR at least 1 dB higher than that of any other algorithm. This superior SNR translates into better signal fidelity, as evidenced in  Fig. \ref{SNR-Time}(b), where the signal   reconstructed by SDSNT closely matches the original audio signal.
\vspace{-0.3cm}
\begin{figure}[htbp]
   \centering
   \subfigure[SNR vs. CPU time]{
\begin{minipage}[t]{0.45\linewidth}
  \includegraphics[interpolate=true, width=\textwidth,height=0.7\textwidth]{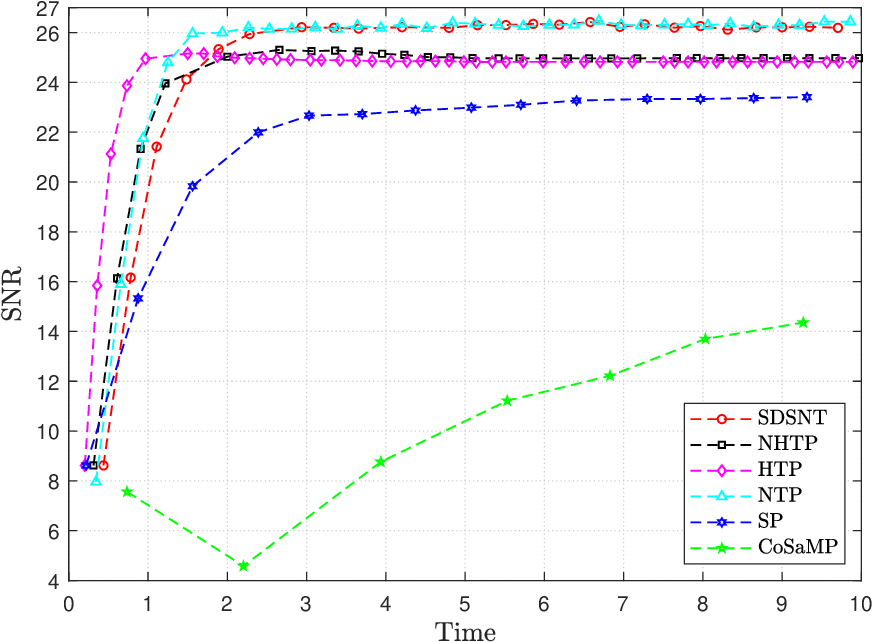}
   \end{minipage}}\hspace{0.2cm}
   \subfigure[Original audio signal vs. Reconstructed signal]{
\begin{minipage}[t]{0.45\linewidth}
  \includegraphics[interpolate=true,width=\textwidth,height=0.7\textwidth]{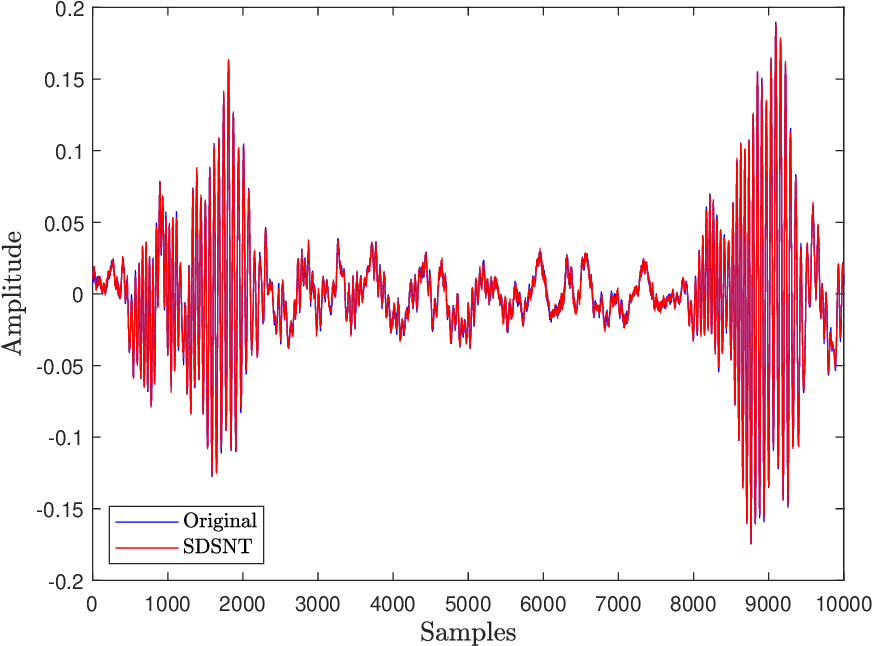}
   \end{minipage}
   }
\vspace{-0.4cm}
   \caption{(a) Comparison of SNR (in dB) vs. CPU time (in seconds); (b) The original audio signal  and the reconstructed one by SDSNT.}   \label{SNR-Time}
   \end{figure}

\section{Conclusions}
The SDSNT algorithm was proposed by combining a sparse descent direction with hard thresholding and orthogonal projection,  in which the direction is derived from the Newton direction within a low-dimensional subspace. This approach involves computing the principal submatrix of the Hessian that is positive definite under the RIP,  thereby circumventing the singularity of the Hessian. The global convergence of SDSNT has been established under the RIP  and with an appropriate stepsize. Numerical results show that SDSNT can compete with several state-of-the-art algorithms  in terms of both recovery capability and computational efficiency.

\section*{Data availability} The data will be openly available at the following URL after publication: {\href{https://github.com/zhongfengsun/zhongfengsun.github.io/tree/gh-pages/data}{https://github.com/zhongfengsun/zhongfengsun.github.io\\/tree/gh-pages/data}}.

\section*{Acknowledgments}
This work was supported by Shandong Provincial Natural Science Foundation (ZR2025MS99, ZR2023MA020), the National Natural Science Foundation of China (12371305, 12301400, 12471295, 12426306), Guangdong Basic and Applied Basic Research Foundation (2024A1515011566),  and Hetao
Shenzhen-Hong Kong Science and Technology Innovation Cooperation Zone Project (HZQSWS-KCCYB-2024016).

\end{document}